\def\vs{\vskip 0.30cm}
\def\vss{\vskip 0.05cm}
\def\mvs{\vspace{-0.2cm}}
\def\medmu{\medmuskip=0mu}
\def\ga{\upgamma}
\def\la{\uplambda}
\def\si{\upsigma}
\def\longto{\longrightarrow}
\newcommand{\inv}{^{-1}}
\newcommand{\deff}[1]{\textbf{#1}}
\newcommand{\N}{\mathbb{N}}
\newcommand{\Z}{\mathbb{Z}}
\newcommand{\sn}{S_n}
\newcommand{\aut}{\mathrm{Aut}}
\newcommand{\supp}{\mathrm{supp}}
\def\fix{\mathrm{fix}}
\newcommand{\gp}[1]{\left\langle #1 \right\rangle}
\newcommand{\set}[1]{\left\{ #1\right\}}
\newcommand{\sign}[1]{\mathrm{sign}(#1)}
\newcommand{\drob}[2]{{\,\Large \sfrac{#1}{#2}}}
\theoremstyle{plain}
\definecolor{shadethmcolor}{cmyk}{.02,.02,0,0} 
\definecolor{shaderulecolor}{cmyk}{.75,.75,0,.5} 
\newtheorem*{theore}{Theorem}
\newtheorem{prop}{Proposition}
\newtheorem{Lemma}{Lemma}
\newtheorem{Corollary}{Corollary}
\theoremstyle{remark}
\newtheorem*{rema}{\textbf{Remark}}
\theoremstyle{definition}
\newdimen\arrowsize 
\title{All even permutations with large support are commutators of generating pairs.}
\author{David Zmiaikou \\  
\emph{\small Max Planck Institute for Mathematics} \\ 
\emph{\small Vivatsgasse 7, 53111 Bonn, Germany}}
\titleformat{\section}{\Large\bfseries\scshape}{\thesection}{0.5cm}{}
\titleformat{\subsection}{\Large\bfseries\color{Mahogany}}{\large\thesubsection}{0.4cm}{}
\titlespacing{\section}{0cm}{1.2cm}{0.5cm}
\titlespacing{\subsection}{0cm}{1cm}{0.3cm}
\begin{document}
\maketitle
\abstract{We establish that any even permutation $\mu\in A_n$ moving at least $\left[\frac{3n}{4}\right]+o(n)$ points is the commutator of a generating pair of $A_n$ and a generating pair of $S_n$. From this we deduce an exponential lower bound for the number of systems of transitivity in alternating and symmetric groups.

\section{Introduction}
\noindent Denote by $A_n$ and $S_n$ the alternating and the symmetric groups of the set $\set{1, 2, \ldots, n}$. The action on the set is considered to be from the left, for instance $(1\;2)(2\;3)=(1\;2\;3)$. Due to \O ystein Ore \cite[1951]{ore}, we know that any element of the alternating group $A_n$ with $n\ge 5$ is a commutator of two elements. On the other hand, John D.~Dixon proved in \cite[1969]{dixon1} that the propotion of pairs of permutations from $S_n$ generating $A_n$ or $S_n$ is greater than $1- 2/(\log\log n)^2$ for sufficiently large $n$\footnote{It was an old conjecture of Eugen Netto \cite[1892]{netto} that the probability of a random pair of permutations to generate $A_n$ or $S_n$ tends to $1$ as $n\to\infty$.}.  
Therefore, a natural question arose which even permutations are actually commutators of generating pairs. 

In our paper, we are going to establish the next result:

\begin{theore}\label{th:alternatingorigami}
Let $n\ge 10$ be a positive integer, and let $p$ be a prime number such that $\left[\frac{3n}{4}\right] \le p \le n-3$. Then every permutation $\mu\in A_n$ moving at least $p+2$ points can be presented as commutators 
$$\mu=[\si,\tau_1] = [\si,\tau_2]\,,$$ 
where pairs $(\sigma, \tau_1)$ and $(\sigma, \tau_2)$ generate $A_n$ and $S_n$ respectively, $\si$ is a $p$-cycle and $\tau_1\inv \tau_2$ is a transposition.
\end{theore}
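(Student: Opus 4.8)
The plan is to turn the commutator equation into a conjugacy problem. With the convention $[\si,\tau]=\si\tau\si\inv\tau\inv$ one checks that $[\si,\tau]=\mu$ is equivalent to $\tau\,\si\inv\,\tau\inv=\si\inv\mu$; that is, $\tau$ must conjugate the $p$-cycle $\si\inv$ onto $\rho:=\si\inv\mu$. Such a $\tau$ exists exactly when $\rho$ has the same cycle type as $\si\inv$, i.e. when $\rho$ is again a $p$-cycle, which says precisely that $\mu=\si\rho$ is a product of two $p$-cycles. So the problem splits into two tasks: (i) produce a single $p$-cycle $\si$, fixing at least three points, with $\si\inv\mu$ also a $p$-cycle; and (ii) among all conjugators $\tau$ — which form a coset $\tau_0\,C_{\sn}(\si)$ of the centralizer $C_{\sn}(\si)=\group{\si}\times\mathrm{Sym}(\mathrm{Fix}\,\si)$ — select $\tau_1,\tau_2$ realizing the required parities, generation, and transposition relation.

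For task (i) I count cycles. Let $\mu$ move $m\ge p+2$ points and have $r$ nontrivial cycles; since $\mu$ is even, $m+r$ is even and $r\le m/2$. It suffices to realize a factorization $\mu=\si\rho$ whose two $p$-cycles have supports inside $\supp(\mu)$ and with $\group{\si,\rho}$ transitive on the $m$-point set $\supp(\mu)$. For such a transitive triple $(\si,\rho,\mu\inv)$ the Riemann–Hurwitz relation (with $c(\cdot)$ the number of cycles, counting fixed points) reads $c(\si)+c(\rho)+c(\mu\inv)=m+2-2g$, and substituting $c(\si)=c(\rho)=1+(m-p)$ and $c(\mu\inv)=r$ gives
$$g=p-\tfrac12(m+r),\qquad g\in\Z_{\ge 0}.$$
Thus a transitive factorization with the right cycle types exists precisely when $\tfrac12(m+r)\le p\le m$, and both bounds are guaranteed by the hypotheses: $p\ge\left[\tfrac{3n}{4}\right]\ge\tfrac34 m\ge\tfrac12(m+r)$ on the left, and $p\le m-2$ on the right. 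In particular $\si$ is a $p$-cycle fixing $n-p\ge 3$ points, of which $m-p\ge 2$ lie inside $\supp(\mu)$.

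For task (ii), fix any conjugator $\tau_0$ with $\tau_0\si\inv\tau_0\inv=\rho$; every conjugator is $\tau_0 z$ with $z\in C_{\sn}(\si)$. As $|\mathrm{Fix}\,\si|=n-p\ge 3$, the centralizer contains odd elements, so the coset meets both $\an$ and $\sn\setminus\an$, and its $\mathrm{Sym}(\mathrm{Fix}\,\si)$-part can be used to route the $n-p$ fixed points of $\si$ into the single large orbit. I choose an even $\tau_1$ in the coset with $\group{\si,\tau_1}$ transitive, and set $\tau_2:=\tau_1\,(x\;y)$ for two fixed points $x,y$ of $\si$. Then $(x\;y)\in C_{\sn}(\si)$, so $\tau_2$ is again a valid conjugator, $\tau_1\inv\tau_2=(x\;y)$ is a transposition, $\tau_2$ is odd, and transitivity of $\group{\si,\tau_2}$ is checked in the same way. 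To upgrade transitivity to $\an$ or $\sn$, note that because $p$ is prime with $p>n/2$, any transitive $G=\group{\si,\tau}$ is automatically primitive: a nontrivial block system would make $\si$ act on the $t=n/b<p$ blocks through a cyclic group of order dividing $p$, hence trivially, so one block would contain the whole $p$-element orbit $\supp(\si)$ and have size $\ge p>n/2$, forcing $t=1$. Jordan's theorem (a primitive group of degree $n$ containing a prime-length $p$-cycle with at least three fixed points contains $\an$) then gives $G\supseteq\an$, and parity decides: $\group{\si,\tau_1}=\an$ while $\group{\si,\tau_2}=\sn$, yielding $\mu=[\si,\tau_1]=[\si,\tau_2]$ with all the stated properties.

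I expect the main obstacle to be task (i): proving that the numerical condition $\tfrac12(m+r)\le p\le m$ is not merely necessary but sufficient, i.e. actually constructing the two $p$-cycles with the exact genus $g=p-\tfrac12(m+r)$. I plan to do this by an explicit inductive construction that merges the $r$ cycles of $\mu$ one at a time, at each step adjusting the auxiliary cycle, so as to arrive at a single $p$-cycle $\si$ with $\si\inv\mu$ a $p$-cycle, while reserving enough fixed points for the transposition $(x\;y)$ and for Jordan's theorem. The secondary technical point is arranging transitivity of $\group{\si,\tau_1}$ and $\group{\si,\tau_2}$ simultaneously with the correct parities, which the large $\mathrm{Sym}(\mathrm{Fix}\,\si)$-freedom in the centralizer coset should accommodate.
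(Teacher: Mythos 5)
Your skeleton runs parallel to the paper's proof: reduce $[\si,\tau]=\mu$ to the requirement that $\si\inv\mu$ be a $p$-cycle, adjust the conjugator inside the coset $\tau_0\,C_{S_n}(\si)$ to get transitivity and both parities, then conclude with primitivity (from $p>n/2$) and Jordan's theorem --- these last steps are exactly the paper's Lemmas and are correct in your write-up. The genuine gap is your task (i), which is the heart of the matter. The Riemann--Hurwitz count only shows that $g=p-\tfrac12(m+r)\ge 0$ is \emph{necessary} for a transitive factorization of $\mu$ into two $p$-cycles; sufficiency --- actually constructing such a factorization --- is a real combinatorial theorem, not a consequence of the count, and you explicitly defer it to a future ``inductive construction''. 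That existence statement is precisely Bertram's theorem (the paper's Proposition 1): every $\mu\in A_n$ is a commutator $[\si,\tau]$ with $\si$ an $\ell$-cycle if and only if $\left[\frac{3n}{4}\right]\le\ell\le n$, equivalently $\mu$ is a product of two $\ell$-cycles. The paper simply cites it and then repairs transitivity afterwards; you would have to reprove it (in a stronger, support-constrained form), so as written the proposal circles the main difficulty rather than resolving it. There is also a slip in your numerics: $\left[\frac{3n}{4}\right]\ge\tfrac34 m$ is false when $m=n$ and $4\nmid n$ (then $\left[\frac{3n}{4}\right]<\tfrac34 n$); the needed inequality $2p\ge m+r$ does still hold, but only after invoking the parity constraint that $m-r$ is even, which is exactly the worst-case analysis that makes $\left[\frac{3n}{4}\right]$ the sharp threshold in Bertram's theorem.

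Your task (ii) has a second, smaller but real, flaw: setting $\tau_2=\tau_1(x\;y)$ for an \emph{arbitrary} pair $x,y\in\fix(\si)$ need not preserve transitivity. If $x,y$ lie in the same cycle of $\tau_1$, right multiplication by $(x\;y)$ splits that cycle, and a resulting cycle or fixed point can lie entirely inside $\fix(\si)$, which kills transitivity. Concretely, if $\tau_1$ contains the cycle $(a\;y_1\;y_2\;a'\;y_3)$ with $a,a'\in\supp(\si)$ and $y_1,y_2,y_3\in\fix(\si)$ (a configuration compatible with all your hypotheses), then $\tau_1(y_1\;y_2)$ fixes $y_2$, so $\gp{\si,\tau_1(y_1\;y_2)}$ is intransitive; the same cycle shows your ``choose an even transitive $\tau_1$ in the coset'' step needs an argument, not just the observation that the coset meets $A_n$. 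This is exactly where the hypothesis $|\supp(\mu)|\ge p+2$ does its work in the paper: it guarantees two points $x,x'\in\supp(\si)$ whose images $y=\tau(x)$, $y'=\tau(x')$ lie in $\fix(\si)$, and the parity switch is performed with the \emph{specific} transposition $(y\;y')$, for which one checks that every cycle of $\tau(y\;y')$ still meets $\supp(\si)$ and no new fixed points are created (in the example above, the good choice is $(y_1\;y_3)$). Without that careful choice, your construction of $\tau_1$ and $\tau_2$ can fail even when the factorization of task (i) is granted.
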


\begin{rema}
When $n\ge 14$ (except $n=19$) there always exists a prime $p$ such that $\left[ \frac{3n}{4} \right]\le p\le n-3$. Indeed, Jitsuro Nagura \cite[1952]{nagura} proved that, for any $m\ge 25$, the interval between $m$ and $\frac{6}{5}m$ contains a prime number. Therefore, for $\left[\frac{3n}{4}\right]\ge 25$ or equivalently $n\ge 34$, there is a prime $p$ such that\mvs
\begin{equation}\label{eq:primep}
\renewcommand{\arraystretch}{3}
\begin{array}{rl} 
\left[\dfrac{3n}{4}\right]\;\le\;\;\; p& \le\; \dfrac{6}{5}\left[\dfrac{3n}{4}\right]\le \dfrac{9n}{10}, \\
\textrm{and so }\; \left[\dfrac{3n}{4}\right]\; \le\;\;\; p& \le\; \left[\dfrac{9n}{10}\right] = n + \left[-\dfrac{n}{10}\right].
\end{array}
\renewcommand{\arraystretch}{1}
\end{equation}
For $n\in\lsem 14,33\rsem\setminus \{19\}$, a prime $p$ is given in Table \ref{table:primenumber}.
\begin{table}[htdp]
\caption{A prime number $p$ such that $\left[\frac{3n}{4}\right]\le p\le n-3$, where $n\in\lsem 14,33\rsem\setminus \{19\}$.}
\renewcommand{\arraystretch}{1.4}
$$
\begin{array}{|c|c|c|c|c|c|c|c|c|c|c|c|c|c|c|c|c|c|c|c|}
\hline
n&14&15&16&17&18&20&21&22&23&24&25&26&27&28&29&30&31&32&33\\ \hline
\left[\frac{3n}{4}\right]&10&11&12&12&13&15&15&16&17&18&18&19&20&21&21&22&23&24&24\\ \hline\hline
p&11&11&13&13&13&17&17&17&17&19&19&19&23&23&23&23&23&29&29\\\hline
\end{array}
$$
\label{table:primenumber}
\renewcommand{\arraystretch}{1}
\end{table}%
\end{rema}

As a corollary of Theorem, we obtain that any even permutation $\mu\in A_n$ moving at least $\left[\frac{3n}{4}\right]+o(n)$ points is the commutator of a generating pair of $A_n$ and a generating pair of $S_n$ (see Corollary \ref{cor:d10}b). The proportion of such permutation tends to $1$ as $n\to\infty$ (see Corollary \ref{cor:altrandomcomm}). We also deduce an exponential lower bound on the number of $T_2$-systems in alternating and symmetric groups (see Corollary \ref{cor:alternatingstrata}). The latter is related to the number of connected components in Product Replacement Algorithm graphs (see \cite{pak}) and the number of square-tiled surfaces in a given stratum with monodromy group $A_n$ or $S_n$ (see \cite{zmiaikou}).

\vs Let us mention that Martin J. Evans answered the question for even permutations of particular cycle types\footnote{A permutation is of \deff{cycle type} $r_1,\ldots,r_k$ if it is a product of disjoint non-trivial cycles of lengths $r_1,\ldots,r_k$.} in the PhD thesis \cite[1985]{evans}. More precisely, from the proof of his Theorem 3.2 one deduces the following:
\begin{itemize}
\item Let $\mu\in A_n$ be of cycle type $3, r_{1}, \ldots, r_{k}, r_{1}, \ldots, r_{k}$. Then $\mu$ is the commutator $[\sigma, (1\;2)\tau]$ of a generating pair of $S_n$ such that 
	\begin{itemize}
	\item[] $\sigma = (2\;3\;\ldots\;n)$ and $\tau$ is of cycle type $r_1,\ldots,r_k$ with\footnote{We denote by $\supp(\tau)$ and $\fix(\tau)$ the support and the set of fixed points of $\tau$ respectively.} $\supp(\tau)\subseteq 
	\set{4,6,\ldots,n-1}$, if $n\ge 7$ is odd;
	\item[] $\sigma = (1\;2\;\ldots\;n)$ and $\tau$ is of cycle type $r_1,\ldots,r_k$ with $\supp(\tau)\subseteq 
	\set{4,6,\ldots,n-2}$, if $n\ge 8$ is even.
	\end{itemize}

\item Let $n\ge 9$ be odd and $\mu\in A_n$ be of cycle type $3, r_{1}, \ldots, r_{k}, r_{1}, \ldots, r_{k}$.  Then $\mu$ is the commutator $[(1\;2\;\ldots\;n), \gamma\tau]$ of a generating pair of $A_n$ such that $\tau$ is of cycle type $r_1,\ldots,r_k$ with $\supp(\tau)\subseteq \set{6,8,\ldots,n-1}$ and
	\begin{itemize}
	\item[] $\gamma=(1\;2)$, if $\tau$ is odd;
	\item[] $\gamma=(1\;2\;3)$, if $\tau$ is even.
	\end{itemize}

\item Let $n\ge 8$ be even and $\mu\in A_n$ be of cycle type $3, r_{1}, \ldots, r_{k}, r_{1}, \ldots, r_{k}$\,, where $\prod_{i=1}^{k} (-1)^{r_i + 1} = -1$.  Then $\mu$ is the commutator $[(2\;3\;\ldots\;n), (1\;2)\tau]$ of a generating pair of $A_n$ such that $\tau$ is of cycle type $r_1,\ldots,r_k$ with $\supp(\tau)\subseteq \set{5,7,\ldots,n-1}$.

\item Let $n\ge 8$ be even and $\mu\in A_n$ be of cycle type $2,2, r_{1}, \ldots, r_{k}, r_{1}, \ldots, r_{k}$\,, where $\prod_{i=1}^{k} (-1)^{r_i + 1} = 1$.  Then $\mu$ is the commutator $[(2\;3\;\ldots\;n), (1\;2\;3)\tau]$ of a generating pair of $A_n$ such that $\tau$ is of cycle type $r_1,\ldots,r_k$ with $\supp(\tau)\subseteq \set{5,7,\ldots,n-1}$.
\end{itemize}
Also, Shelly Garion and Aner Shalev \cite[2009]{garion-shalev} extended related research on simple groups. They showed that in any finite simple non-abelian group almost all elements are commutators of generating pairs.


\section*{Acknowledgements}
\noindent I am grateful to Jean-Christophe Yoccoz for important remarks and suggestions. The results of this paper were obtained while I was working on my PhD thesis \cite{zmiaikou} under his supervision.

\section{Proof of Theorem}
\noindent We will need the following propositions and lemmas.

\begin{prop}[Edward Bertram \cite{bertram}, 1972]\label{prop:bertram}
Every even permutation $\mu\in A_n$ is a commutator $[\si,\tau]$, where $\si$ is an $\ell$-cycle and $\tau\in S_n$\,, if and only if\, $\left[\frac{3n}{4}\right]\le \ell\le n$.	
\end{prop}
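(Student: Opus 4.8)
The plan is to recast the statement as one about products of two $\ell$-cycles. Writing $[\sigma,\tau]=\sigma\tau\sigma^{-1}\tau^{-1}$, the second factor $\tau\sigma^{-1}\tau^{-1}$ is conjugate to $\sigma^{-1}$ and hence is again an $\ell$-cycle, so any such commutator is a product of two $\ell$-cycles. Conversely, since any two $\ell$-cycles are conjugate in $S_n$, if $\mu=\sigma_1\sigma_2$ with $\sigma_1,\sigma_2$ both $\ell$-cycles, then $\sigma_2=\tau\sigma_1^{-1}\tau^{-1}$ for a suitable $\tau\in S_n$ and thus $\mu=[\sigma_1,\tau]$. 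Therefore $\mu$ is a commutator of an $\ell$-cycle with an element of $S_n$ if and only if $\mu$ is a product of two $\ell$-cycles, and the task reduces to determining, uniformly over all even $\mu$, the range of $\ell$ for which this is always possible.

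For the necessity of $\left[\frac{3n}{4}\right]\le\ell\le n$, the bound $\ell\le n$ is clear since an $\ell$-cycle must live in $S_n$. For the lower bound I would invoke the Riemann--Hurwitz (genus) inequality: writing $c(\pi)$ for the number of cycles of $\pi$ counted with fixed points, any factorisation $\alpha\beta=\rho$ in $S_n$ with $\langle\alpha,\beta\rangle$ having $t$ orbits satisfies $c(\alpha)+c(\beta)+c(\rho)\le n+2t$. The extremal permutation is a fixed-point-free involution $\mu$, a product of $\lfloor n/2\rfloor$ disjoint transpositions chosen to be even. Because $\supp\mu=\set{1,\ldots,n}$ and each $\sigma_i$ is a single cycle, the two supports must meet, so $\langle\sigma_1,\sigma_2\rangle$ is transitive and $t=1$; substituting $c(\sigma_i)=n-\ell+1$ and $c(\mu)=n/2$ into the inequality yields $2(n-\ell+1)+n/2\le n+2$, that is $\ell\ge 3n/4$. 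Hence for $\ell<\left[\frac{3n}{4}\right]$ this particular $\mu$ is not a product of two $\ell$-cycles, which proves necessity, with the obvious modifications of the involution (one or two extra fixed points, or a $3$-cycle adjusting parity) when $n$ is not divisible by $4$.

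For sufficiency I would argue constructively, after first recording the general threshold. Applying the same inequality to an arbitrary even $\mu$ with $|\supp\mu|=s$ and $\nu$ nontrivial cycles, and using that the transitive orbit cannot contain fewer than $s$ points (so its complement of $\mu$-fixed points is nonnegative), forces $\ell\ge(s+\nu)/2$; since each nontrivial cycle occupies at least two points one has $(s+\nu)/2\le\frac{3}{4}s\le\frac{3}{4}n$, so the hypothesis $\ell\ge\left[\frac{3n}{4}\right]$ always clears this obstruction. The construction then splits into two stages. The first is a base case producing two cycles of the minimal admissible length whose product is $\mu$, assembled directly from the cycle decomposition of $\mu$ by treating odd cycles, matched pairs of equal-length cycles, transpositions and residual fixed points separately, with parity bookkeeping ensuring each factor emerges as a single $\ell$-cycle. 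The second is a growing lemma: given $\mu=\sigma_1\sigma_2$ with both factors of length $\ell$ and at least two points available outside $\supp\sigma_1\cup\supp\sigma_2$, one absorbs a new point into each factor without changing the product, obtaining a representation by two $(\ell+1)$-cycles; iterating lifts the length one step at a time up to $\ell=n$. The main obstacle is the base case: building the minimal-length factorisation uniformly across all cycle types while guaranteeing both factors are genuine single cycles of the correct parity is the delicate combinatorial core, and it is precisely the interplay between the genus inequality and the extremal involution that pins the constant down to the sharp value $\left[\frac{3n}{4}\right]$.
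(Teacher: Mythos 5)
First, a point of reference: the paper does not prove this proposition at all --- it is imported as a known result of Bertram (1972) and used as a black box, so your attempt can only be compared with Bertram's original argument. Your opening reduction is correct and is exactly the classical one: since $\tau\si\inv\tau\inv$ is again an $\ell$-cycle, and any two $\ell$-cycles are conjugate in $S_n$, being a commutator $[\si,\tau]$ with $\si$ an $\ell$-cycle is the same as being a product of two $\ell$-cycles (this is precisely the form in which Bertram states his theorem). The necessity half via the genus inequality is also essentially sound, though your ``obvious modifications'' are not quite right: for $n\equiv 2\pmod 4$, an even involution with two fixed points only yields the bound $\ell\ge 3(n-2)/4$, which is strictly below $\left[\frac{3n}{4}\right]$; to get the sharp constant in that case you need, e.g., two $3$-cycles together with $\frac{n-6}{2}$ transpositions, giving $\ell\ge\frac{1}{2}\left(n+\frac{n-2}{2}\right)=\left[\frac{3n}{4}\right]$.

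The genuine gap is the sufficiency half, which is the actual content of the theorem, and it fails on two counts. First, the base case is not proved: you explicitly defer ``the delicate combinatorial core,'' and the genus inequality cannot substitute for it --- it shows the absence of one obstruction, not the existence of a factorisation. Second, your growing lemma is misconditioned in a way that makes the whole induction collapse: you require two points outside $\supp(\sigma_1)\cup\supp(\sigma_2)$, but $\supp(\mu)\subseteq\supp(\sigma_1)\cup\supp(\sigma_2)$ always, so for any $\mu$ with full or near-full support (in particular the extremal fixed-point-free involution that pins down the constant) the union of the two supports is all or nearly all of $\{1,\dots,n\}$ at every stage. Hence the hypothesis of your lemma is never satisfied, the iteration cannot even start from the base case $\ell_0=\left[\frac{3n}{4}\right]$, and the final step $\ell=n-1\to n$ is likewise unreachable. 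The correct lengthening mechanism (as in Bertram's paper) must re-thread points already lying inside the supports of the two cycles --- absorbing fixed points of $\mu$ or redistributing points between the two factors --- which is exactly what your formulation excludes. As written, your proposal establishes only necessity.
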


Denote by $\lsem x, y\rsem$ the set of integers from $x$ to $y$. A nonempty subset $\Delta$ of the set $\Omega=\lsem 1, n\rsem$ is called a \deff{block} for a permutation group $G\subseteq\sn$ if, for each $\alpha\in G$, either $\alpha(\Delta)=\Delta$ or $\alpha(\Delta)\cap\Delta=\emptyset$. In particular, the singletons $\{x\}$ and the whole set $\Omega$ are blocks, which are called \deff{trivial}. A permutation group $G$ is said to be \deff{primitive} if it has no nontrivial blocks.

\begin{prop}[Camille Jordan \cite{jordan73}, 1873]\label{th:jordan}
Let $G$ be a primitive subgroup of $S_n$.
If $G$ contains a cycle of prime order $p\leqslant n-3$, then either $G=A_n$ or $G=S_n$.
\end{prop}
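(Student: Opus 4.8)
The plan is to follow the classical route via \emph{Jordan sets}, treating the support of the given cycle as such a set and then reducing, through a chain of these sets, to the case of a $3$-cycle, where primitivity alone forces the full alternating group. Write $\sigma\in G$ for the cycle of prime order $p$, put $\Gamma=\supp(\sigma)$ so that $|\Gamma|=p$, and set $\Sigma=\Omega\setminus\Gamma$, which by hypothesis satisfies $|\Sigma|=n-p\ge 3$. The first observation is that $\sigma$ fixes $\Sigma$ pointwise and permutes $\Gamma$ as a single $p$-cycle, so the pointwise stabilizer $G_{(\Sigma)}$ contains $\langle\sigma\rangle$ and is therefore transitive on $\Gamma$; that is, $\Gamma$ is a Jordan set. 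Since $|\Gamma|=p$ is prime, a transitive group on $\Gamma$ can have no nontrivial blocks (block sizes would have to divide $p$), so the action of $G_{(\Sigma)}$ on $\Gamma$ is in fact primitive.

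Next I would exploit transitivity of $G$: every translate $g\Gamma$ with $g\in G$ is again the support of the conjugate $p$-cycle $g\sigma g\inv$, hence again a Jordan set. The technical engine is the standard union lemma — if two Jordan sets meet, their union is a Jordan set — combined with primitivity of $G$, which prevents the translates $g\Gamma$ from being confined to a nontrivial block system and so forces pairs of conjugate supports to overlap. The goal of this bookkeeping is to produce inside $G$ conjugates of $\sigma$ whose supports overlap in all but one or two points; taking suitable products and commutators of such overlapping $p$-cycles (this is where primality of $p$ enters, so that every nontrivial power of $\sigma$ is still a $p$-cycle generating $\langle\sigma\rangle$) yields a nontrivial element of $G$ of strictly smaller support. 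Iterating the reduction, and using that the three or more fixed points of $\sigma$ leave enough room to keep the intermediate supports proper, I would eventually exhibit a $3$-cycle in $G$.

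It then remains to settle the base case: a primitive group containing a $3$-cycle contains $A_n$. For this, note that the set of all $3$-cycles in $G$ is closed under conjugation, so it generates a normal subgroup $N$ of $G$, which is transitive because a nontrivial normal subgroup of a primitive group is transitive. Declaring two points equivalent when they are joined by a chain of overlapping supports of $3$-cycles of $G$ gives a $G$-invariant partition of $\Omega$; primitivity forces it to be trivial, so these supports chain together across all of $\Omega$ and $N$ contains every $3$-cycle, whence $N=A_n$. Since $A_n\le G\le S_n$ and $[S_n:A_n]=2$, we conclude $G=A_n$ or $G=S_n$.

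The main obstacle is the middle step: converting primitivity into a guarantee that conjugates of $\sigma$ overlap in precisely the pattern needed, and then verifying that the resulting products really do have smaller support and that the process terminates at length $3$ rather than stalling at some larger value. This is exactly where both hypotheses are used — the primality of $p$ keeps all relevant powers of $\sigma$ full $p$-cycles, and the bound $n-p\ge 3$ supplies the three fixed points whose slack keeps every intermediate Jordan set proper, so that primitivity of $G$ can be invoked afresh at each stage of the reduction.
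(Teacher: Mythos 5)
The paper does not prove this proposition at all: it is quoted as a classical theorem of Jordan (1873) and used as a black box, so the only question is whether your argument stands on its own. It does not. Your first step (the support $\Gamma$ of $\sigma$ is a Jordan set on which the pointwise stabilizer acts primitively, since a transitive group of prime degree admits no nontrivial blocks) and your final step (a primitive group containing a $3$-cycle contains $A_n$, via the normal closure and a chaining argument) are both correct and standard. But the middle step --- manufacturing a $3$-cycle from the $p$-cycle --- is the entire content of Jordan's theorem, and you state it as a goal rather than prove it: you say the ``bookkeeping'' should produce conjugates of $\sigma$ overlapping in all but one or two points whose products have strictly smaller support, and then you explicitly list as ``the main obstacle'' precisely the verification that this can be arranged and that the descent terminates at $3$. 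Naming the obstacle is not the same as overcoming it; as written, the proposal is an outline with its central step missing.

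The gap is not a formality. Conjugation controls the support of $g\sigma g^{-1}$ but not how that cycle orders the points inside $\Gamma\cap g\Gamma$, and a product such as $\sigma\cdot\left(g\sigma g^{-1}\right)^{-1}$ collapses to small support only if $g$ respects the cyclic order on the overlap --- something primitivity and transitivity alone cannot provide; the classical proofs obtain this control by first proving that $G$ is $(n-p+1)$-fold transitive (Jordan's multiple-transitivity theorem for Jordan sets), a substantial result absent from your sketch. Moreover, the descent can genuinely stall: $\mathrm{PGL}_2(p)$ acting on the projective line ($n=p+1$) and $\mathrm{P\Gamma L}_2(8)$ acting on $9=7+2$ points are primitive, contain $p$-cycles, and do not contain the alternating group, so an intermediate configuration consisting of a $p$-cycle inside a primitive group of degree $p+1$ or $p+2$ is a dead end. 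This shows the hypothesis $p\le n-3$ is sharp and must enter the reduction in a precise structural way, not merely as ``slack'' from the fixed points; your sketch never identifies where it does. To make this rigorous you should either carry out the full Jordan-set argument (e.g.\ Wielandt, \emph{Finite Permutation Groups}, Theorem 13.9, or Dixon--Mortimer \cite{dixon-mortimer}, Theorem 7.4B) or simply cite the theorem, as the paper itself does.
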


It is easy to see that a primitive permutation group is automatically transitive. The inverse is not always true when $n$ is composite: a transitive group can have a nontrivial block $\Delta$ and, if it does, the images $\alpha(\Delta)$ form a $G$-invariant partition of $\Omega$ in which all parts have equal size $1<|\Delta|<n$. In this case $|\Delta|$ must divide $n$. This observation can also be stated as the next lemma:
\begin{Lemma}\label{lem:blocks}
  Let $G$ be a transitive permutation group on $\Omega$, and $x\in \Omega$. Then $G$ is primitive if and only if the only blocks containing $x$ are $\{x\}$ and $\Omega$.
\end{Lemma}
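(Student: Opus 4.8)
The plan is to treat the two implications separately, with the forward direction being essentially tautological and the substance lying in the reverse direction, where transitivity does the real work.

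For the implication $\Rightarrow$, I would argue directly from the definitions. If $G$ is primitive, then by hypothesis it has no nontrivial blocks at all; in particular the only blocks containing $x$ are trivial ones. Among the trivial blocks, the whole set $\Omega$ always contains $x$, while the only singleton containing $x$ is $\{x\}$ itself. Hence the only blocks containing $x$ are $\{x\}$ and $\Omega$, as claimed.

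For the reverse implication $\Leftarrow$, I would argue by contraposition: assuming $G$ is \emph{not} primitive, I produce a nontrivial block containing $x$, contradicting the hypothesis. So let $\Delta$ be a nontrivial block, i.e. $1<|\Delta|<n$. The key step is to translate $\Delta$ so that it contains $x$ while remaining a block. First I would record the auxiliary fact that the image $g(\Delta)$ of a block $\Delta$ under any $g\in G$ is again a block: for $\alpha\in G$, applying the block dichotomy to the conjugate $g^{-1}\alpha g\in G$ gives either $(g^{-1}\alpha g)(\Delta)=\Delta$ or $(g^{-1}\alpha g)(\Delta)\cap\Delta=\emptyset$, and applying $g$ to both alternatives yields $\alpha(g(\Delta))=g(\Delta)$ or $\alpha(g(\Delta))\cap g(\Delta)=\emptyset$, as required. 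Now pick any $y\in\Delta$; by transitivity of $G$ there is $g\in G$ with $g(y)=x$. Then $g(\Delta)$ is a block containing $x$, and since each element of $G$ is a bijection of $\Omega$ we have $|g(\Delta)|=|\Delta|$, so $1<|g(\Delta)|<n$. Thus $g(\Delta)$ is a nontrivial block containing $x$, distinct from both $\{x\}$ and $\Omega$, which is the desired contradiction.

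The main (and only mild) obstacle is the verification that the property of being a block is preserved under the $G$-action; once that conjugation identity is in hand, transitivity immediately lets us reposition any nontrivial block onto $x$, and the lemma follows. I expect no serious difficulty beyond keeping the images-of-sets bookkeeping straight.
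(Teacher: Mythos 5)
Your proof is correct and follows essentially the same route as the paper, which states this lemma as a reformulation of its preceding observation that the translates $\alpha(\Delta)$ of a nontrivial block under a transitive group form a $G$-invariant partition of $\Omega$ --- so in particular some translate is a nontrivial block containing $x$. You simply spell out the details the paper leaves implicit (that $g(\Delta)$ is again a block, and that transitivity lets you move $\Delta$ onto $x$), and both verifications are carried out correctly.
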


\begin{Lemma}\label{lem:primitivelargecycle} 
If a transitive permutation group $G$ of degree $n$ contains a $p$-cycle for a prime $p>\frac{n}{2}$, then it is primitive.
\end{Lemma}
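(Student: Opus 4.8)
The plan is to argue by contradiction, using the criterion of Lemma~\ref{lem:blocks} together with the standard fact (recalled just before it) that a nontrivial block of a transitive group yields an equipartition of $\Omega$ by its translates. So I would assume $G$ is imprimitive and fix a nontrivial block $\Delta$ with $1 < |\Delta| = d < n$. Since $G$ is transitive, the images $\alpha(\Delta)$ for $\alpha\in G$ form a $G$-invariant partition $\Omega = \Delta_1 \sqcup \cdots \sqcup \Delta_m$ into blocks of common size $d$, whence $n = dm$ with $d\ge 2$ and $m\ge 2$. The key numerical consequence, which I would record first, is that both factors are at most $n/2$, i.e. $d \le n/2$ and $m \le n/2$, simply because a proper divisor of $n$ cannot exceed $n/2$.

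The heart of the argument is to track the given $p$-cycle $c\in G$ through the action of $G$ on the block set $\{\Delta_1,\ldots,\Delta_m\}$. This action is a homomorphism $G \to S_m$, under which $c$ maps to some $\bar c$ whose order divides the order $p$ of $c$. As $p$ is prime, there are only two possibilities: $\bar c = \id$, or $\bar c$ has order exactly $p$. I would then dispose of each.

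If $\bar c$ has order $p$, then it contains a cycle of length $p$ on the $m$ blocks, so $p \le m \le n/2$, contradicting $p > n/2$. If instead $\bar c = \id$, then $c$ fixes every block setwise, so each $\Delta_i$ is a union of $\langle c\rangle$-orbits; but the support of $c$ is a single orbit of size $p$, which must therefore be contained in one block, giving $d \ge p > n/2$ and again a contradiction. Since both branches are impossible, no nontrivial block exists, and $G$ is primitive by Lemma~\ref{lem:blocks}.

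The proof is short, and the only real subtlety I anticipate is making the dichotomy on $\bar c$ do all the work cleanly: one has to notice that the primality of $p$ is exactly what collapses the induced permutation to just two cases, and that the hypothesis $p > n/2$ then rules out each one against the bounds $d,m\le n/2$. Once those two observations are in place the verification is immediate, so I do not expect a genuinely hard step.
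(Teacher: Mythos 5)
Your proof is correct, and it takes a genuinely different route from the paper's. The paper argues directly rather than by contradiction: normalizing $\si=(1\;2\;\ldots\;p)$, it fixes a \emph{single} block $\Delta$ containing the point $1$ and shows it is trivial, splitting on whether $\Delta$ contains a point $x>p$ (such an $x$ is fixed by $\si$, so $\si^m(\Delta)\cap\Delta\neq\emptyset$ for all $m$, hence $\lsem 1,p\rsem\subseteq\Delta$, and then $|\Delta|$ dividing $n$ together with $|\Delta|\ge p>n/2$ forces $|\Delta|=n$) or $\Delta\subseteq\lsem 1,p\rsem$ (then $\Delta$ is a block for the transitive action of $\gp{\si}$ on a set of prime cardinality $p$, hence $|\Delta|\in\{1,p\}$, the latter only compatible with $p=n$); it concludes via Lemma \ref{lem:blocks}. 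You instead work with the whole block system and the induced homomorphism $G\to S_m$, using primality to force the image of the $p$-cycle to be trivial or of order exactly $p$, and eliminating both cases against the bounds $d\le n/2$ and $m\le n/2$. The same ingredients (primality of $p$, the inequality $p>n/2$, divisibility coming from the block structure) appear in both, but they enter at different points: the paper exploits primality through the triviality of blocks for a transitive group of prime degree, while you exploit it through the order dichotomy in the quotient action. Your argument is coordinate-free, symmetric in the two divisors $d$ and $m$ of $n$, and makes the structural mechanism (the action on the block system) explicit; the paper's argument needs only one block through a base point and verifies the pointwise criterion of Lemma \ref{lem:blocks} directly, at the cost of a slightly more hands-on case analysis. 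Both proofs are complete and of comparable length.
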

\begin{proof} 
Let $\sigma = (1\;2\;\ldots\;p)$ be a $p$-cycle in $G$. Consider a block $\Delta\subseteq \lsem 1,n\rsem$ for the transitive group $G$ such that $1\in\Delta$. If this block also contains an integer $x>p$, then $\si^m(\Delta)\cap\Delta\neq \emptyset$ for any $m\in\Z$, and so $\{1,2,\hdots,p\}\subset \Delta$. Thus $|\Delta|=n$, since $|\Delta|$ divides $n$ and $|\Delta|\ge p> n/2$. Suppose now that $\Delta\subseteq \lsem 1,p\rsem$. Then it is a block for the subgroup $\gp{\si}\subseteq G$ acting transitively on the set $\lsem 1,p\rsem$, and so $\Delta=\{1\}$ or $|\Delta|=p$. The latter is only impossible when $p=n$. In all cases we obtained that $|\Delta|=1$ or $n$. By Lemma \ref{lem:blocks} the group $G$ is primitive. 
\end{proof}

\begin{Lemma}\label{lem:sttransitive}
Let $p$ and $n$ be positive integers with $1<p<n$. Two permutations $\si = (1\;2\;\ldots\;p)$ and $\tau\in S_n$ generate a transitive subgroup if and only if $\tau$ fixes no integer in $\lsem p+1, n\rsem$ and each disjoint cycle of $\tau$ moves at least one integer from $\lsem 1, p\rsem$.

\end{Lemma}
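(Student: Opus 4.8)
The plan is to reduce transitivity to a statement about a single orbit, exploiting that the $p$-cycle $\si$ already fuses $\lsem 1,p\rsem$ into one orbit while fixing every larger point. Write $B=\lsem 1,p\rsem$ and $T=\lsem p+1,n\rsem$, set $G=\gp{\si,\tau}$, and let $O$ be the $G$-orbit of $1$. Since $\si$ acts as a single $p$-cycle on $B$ and fixes $T$ pointwise, $\gp{\si}\subseteq G$ is transitive on $B$, so $O$ automatically contains all of $B$; hence $G$ is transitive if and only if $O\supseteq T$ as well. This reformulation is what makes both implications short.

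To prove sufficiency I would fix an arbitrary $x\in T$ and locate it in $O$. Assuming $\tau$ moves every point of $T$ and every non-trivial cycle of $\tau$ meets $B$, the first hypothesis places $x$ in some non-trivial cycle $c$ of $\tau$, and the second supplies a point $b\in B\cap\supp(c)$. Because $b$ and $x$ lie on the same $\tau$-cycle, some power $\tau^{k}$ sends $b$ to $x$; since $b\in B\subseteq O$ and $\tau\in G$, this yields $x=\tau^{k}(b)\in O$. Hence $T\subseteq O$ and $G$ is transitive.

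For necessity I would assume $G$ transitive and rule out each failure. If $\tau$ fixed some $x\in T$, then $x$ would be fixed by both generators, so $\{x\}$ would be a $G$-orbit; as $n\ge 3$ this contradicts transitivity, giving the first condition. If some non-trivial cycle $c$ of $\tau$ avoided $B$, then $\supp(c)\subseteq T$ would be fixed pointwise by $\si$ and permuted within itself by $\tau$, hence $G$-invariant; being a nonempty proper subset of $\Omega$ this contradicts transitivity as well. I expect the only step needing real care to be this last one---noticing that a $\tau$-cycle lying inside $T$ is untouched by $\si$ and therefore detaches as an invariant set. Equivalently, the whole argument can be run through the orbit graph on $\Omega$ (join each point to its $\si$- and $\tau$-images, so that connected components are exactly the $G$-orbits), which reduces both directions to inspection.
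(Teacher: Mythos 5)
Your proof is correct and follows essentially the same route as the paper: both arguments track the orbit of $1$, note that $\sigma$ already fuses $\lsem 1,p\rsem$ into that orbit, use the fact that a power of $\tau$ acts on each cycle's support as the corresponding power of that cycle to reach $\lsem p+1,n\rsem$, and rule out the two failure modes (a $\tau$-fixed point in $\lsem p+1,n\rsem$, or a $\tau$-cycle avoiding $\lsem 1,p\rsem$) by exhibiting sets that cannot meet the orbit of $1$. Your phrasing of necessity via $G$-invariant subsets is just a slightly more formal dressing of the paper's identical observation.
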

\begin{proof} Consider a decomposition of $\tau$ into disjoint non-trivial cycles:
   $ \tau=\ga_1 \ga_2 \hdots \ga_r $.
   We want to show that the permutations $\sigma$ and $\tau$ generate a transitive permutation group if and only if 
\begin{equation}\label{eq:trans}
   \lsem p+1, n\rsem\subseteq \supp(\tau)\quad\textrm{and}\quad \lsem 1, p\rsem\cap \supp(\ga_i)\neq\emptyset\; \textrm{ for any } 1\le i\le r.
\end{equation}   
   
Suppose that the conditions \ref{eq:trans} are satisfied. Then the integer $1$ can be moved by cycle $\si$ to any integer $a\in \lsem1,p\rsem$, and afterwards it can be moved to any integer $b\in\lsem p+1, n\rsem \subseteq \supp(\ga_1\ga_2\hdots\ga_r)$ by means of a cycle $\ga_i$:
   $$
   \textrm{if } \ga_i^m(a)=b, \textrm{ then } \tau^m(a)=b.
   $$ 
Hence, the group $\gp{\si,\tau}$ is transitive.
   
   Conversely, suppose that the conditions \ref{eq:trans} are not satisfied. If some integer from $p+1$ to $n$ is not in the support of $\tau$, then it can't be sent to $1$ by means of the permutations $\si$ and $\tau$. If there is a cycle $\ga_i$ containing only integers from the interval $\lsem p+1, n\rsem$, then again those integers don't lie in the orbit of $1$ for the action of the group $\gp{\si,\tau}$. 
\end{proof}

\begin{proof}[Proof of Theorem] 
Let us show that for any permutation $\mu\in A_n$ moving at least $p+2$ points, there exist two permutations $\sigma$, $\tau\in S_n$ satisfying the following four conditions:
\renewcommand{\labelenumi}{\arabic{enumi})} %
\begin{enumerate} 
	\item $\mu=[\si,\tau]$,
	\item $\si$ is a $p$-cycle, 
	\item the pair $(\si,\tau)$ generates a transitive subgroup of $S_n$\hspace{1pt},
	\item $\tau$ can be chosen even or odd through multiplication by a transposition.
\end{enumerate}
The first two conditions are fulfilled by applying Proposition \ref{prop:bertram} with $\ell=p$: we have $\mu=[\si,\tau]$ for a $p$-cycle $\si\in A_n$. We may suppose that $\si = (1\;2\;\hdots\;p)$, otherwise conjugate $\si,\tau,\mu$ by a suitable permutation. 
\begin{enumerate}
   \item[3)] Consider the decomposition of $\tau$ into disjoint non-trivial cycles:
   \begin{equation}\label{eq:taucycles} \tau=\ga_1 \ga_2 \hdots \ga_r.\end{equation}
   We may suppose that each cycle $\ga_i$ of $\tau$ moves at least one integer in the set $\lsem 1, p\rsem$. Otherwise remove all cycles permuting only integers from $p+1$ to $n$\,, this will not affect the commutator $[\si,\tau]$. Since
   \begin{equation}\label{eq:mucycles}
	\mu = \si (\tau\si\inv\tau\inv) = (1\;2\;\hdots\;p)\cdot (\tau(p)\;\hdots\;\tau(2)\;\tau(1))
   \end{equation}
   and $|\supp(\mu)|>p$, there is an integer $x\in \lsem 1, p\rsem$ which is sent by $\tau$ to an integer $y\in \lsem p+1,n\rsem$ say $\ga_1(x)=y$. Denote by $y_1, y_2,\hdots,y_s$ the integers from $p+1$ to $d$ which are fixed by $\tau$,
   $$\{y_1, y_2,\hdots,y_s\}=\fix(\tau)\cap \lsem p+1,n\rsem\,,$$
   and let $\tau'=\tau\cdot (y\;y_1\; y_2\;\hdots\;y_s) = (A\;x\;y\;y_1\;\hdots\;y_s)\cdot\ga_2\hdots\ga_r$, where\footnote{A capital letter ($A$, $B$, \ldots) within a cycle abbreviates a sequence of integers. For example, $(1\;2\;3) = (A\;3)$ where $A = 1, 2$.} $\ga_1=(A\;x\;y)$. Then we have
   $$[\si, \tau']=\si \tau\cdot  (y\;y_1\;\hdots\;y_s)\si\inv (y\;y_1\;\hdots\;y_s)\inv\cdot\tau\inv=\si \tau \si\inv \tau\inv=[\si,\tau],$$
   and the group $\gp{\si,\tau'}$ is transitive by Lemma \ref{lem:sttransitive}. We take $\tau=\tau'$ in what follows.
   
   \item[4)] We have just obtained two permutations $\sigma\in A_n$ and $\tau\in S_n$ satisfying the conditions $1)$, $2)$ and $3)$. From (\ref{eq:mucycles}) and $|\supp(\mu)|\ge p+2$ we see that at least two distinct integers $x, x'\in \lsem 1, p\rsem$ are sent by $\tau$ to some integers $y, y'\in \lsem p+1,n\rsem$. In terms of a cycle decomposition (\ref{eq:taucycles}), we are dealing with situations of two kinds:
    $$
\renewcommand{\arraystretch}{1.4}
    \begin{array}{lc}
    \textrm{either } & \ga_1(x)=y\quad\textrm{and}\quad \ga_1(x')=y' \\
    \textrm{or } & \ga_1(x)=y\quad\textrm{and}\quad \ga_2(x')=y'
    \end{array}
\renewcommand{\arraystretch}{1}
    $$    
    In the first situation, we have $\ga_1=(A\;x\;y\;B\;x'\;y')$ and so
    $$\tau\cdot (y\;y')=(A\;x\;y)(B\;x'\;y')\cdot\ga_2\ga_3\hdots\gamma_s.$$
    In the second situation, we have $\ga_1=(A\;x\;y)$, $\ga_2=(B\;x'\;y')$ and so
    $$\tau\cdot (y\;y')=(A\;x\;y\;B\;x'\;y')\cdot\ga_3\hdots\gamma_s.$$
    So far, the commutator hasn't changed:
   $$[\si, \tau(y\;y')]=\si \tau\cdot  (y\;y')\si\inv (y\;y')\cdot\tau\inv=\si \tau \si\inv \tau\inv=[\si,\tau]\,,$$
    and\, $\gp{\si, \tau(y\;y')}$ is still a transitive subgroup of $S_n$ due to Lemma \ref{lem:sttransitive}. Since the permutations $\tau$ and $\tau (y\;y')$ have distinct parities, one of them is even (denote it by $\tau_1$) and the other is odd (denote it by $\tau_2$).
\end{enumerate}

Let us now show that permutations $\si$, $\tau\in S_n$ satisfying the conditions $2)$ and $3)$ above actually generate the entire group $A_n$ or $S_n$ depending on whether $\tau$ is even or odd respectively. Due to Lemma \ref{lem:primitivelargecycle}, the group $G=\gp{\si,\tau}$ is primitive since it is transitive and $p\ge \left[\frac{3n}{4}\right] > \frac{n}{2}$. According to Jordan's theorem (Proposition \ref{th:jordan}), a primitive group containing a $p$-cycle with $p\le n-3$ is either alternating or symmetric. So is $G$. 

Therefore, the pairs $(\sigma,\tau_1)$ and $(\sigma,\tau_2)$ are as required in the statement of our theorem.
\end{proof} 

\section{Corollaries}
\begin{Corollary}\label{cor:d10}
\emph{a)} When $n\ge 20$, except possibly for $n=32$, every permutation $\mu\in A_n$ that fixes at most $\left[\frac{n}{10}\right]-1$ points is the commutator of a generating pair of $A_n$ and a generating pair of $S_n$.\vss

\emph{b)} Given a real number $r > 4$, there exists an integer $N(r)$ such that for all $n\ge N(r)$, any permutation $\mu\in A_n$ fixing at most $\left[\frac{n}{r} \right]$ points is the commutator of a generating pair of $A_n$ and a generating pair of $S_n$.
\end{Corollary}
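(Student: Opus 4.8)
The plan is to reduce both parts to a single invocation of the Theorem. Recall that the Theorem produces \emph{simultaneously} a pair $(\sigma,\tau_1)$ generating $A_n$ and a pair $(\sigma,\tau_2)$ generating $S_n$ with $\mu=[\sigma,\tau_1]=[\sigma,\tau_2]$, as soon as one exhibits a prime $p$ with $\left[\frac{3n}{4}\right]\le p\le n-3$ such that $\mu$ moves at least $p+2$ points. Since a permutation fixing at most $k$ points moves at least $n-k$ points, the remaining task is purely numerical: produce such a prime with $p+2\le n-k$, where $k$ is the allowed number of fixed points. I would treat the two parts separately, the first by explicit primes and the second asymptotically.

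For part a) we have $k=\left[\frac{n}{10}\right]-1$, so a permitted $\mu$ moves at least $n-\left[\frac{n}{10}\right]+1$ points, and it suffices to find a prime $p$ with $\left[\frac{3n}{4}\right]\le p\le n-\left[\frac{n}{10}\right]-1$. For $n\ge 34$ I would take the prime furnished by (\ref{eq:primep}), which satisfies $\left[\frac{3n}{4}\right]\le p\le \left[\frac{9n}{10}\right]$. Writing $n=10q+s$ with $0\le s\le 9$, one checks $\left[\frac{9n}{10}\right]=9q+\left[\frac{9s}{10}\right]$ and $n-\left[\frac{n}{10}\right]-1=9q+s-1$, so the required bound $\left[\frac{9n}{10}\right]\le n-\left[\frac{n}{10}\right]-1$ holds (with equality) for $1\le s\le 9$; the only exceptional case $s=0$, i.e. $10\mid n$, is rescued by noting that then $\frac{9n}{10}=9\cdot\frac{n}{10}$ is divisible by $9$, hence not prime, which forces $p\le\frac{9n}{10}-1=n-\left[\frac{n}{10}\right]-1$. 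The bound $p\le\left[\frac{9n}{10}\right]\le n-3$ needed for Jordan's theorem holds since $n\ge 30$. For $20\le n\le 33$ I would instead read off $p$ from Table \ref{table:primenumber} and verify the single inequality $p\le n-\left[\frac{n}{10}\right]-1$ case by case; this succeeds for every such $n$ except $n=32$, where the tabulated prime $29$ exceeds $n-\left[\frac{n}{10}\right]-1=28$, which is precisely the excluded value.

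For part b) we have $k=\left[\frac{n}{r}\right]$, so it suffices to find a prime $p$ with $\left[\frac{3n}{4}\right]\le p\le n-\left[\frac{n}{r}\right]-2$. Because $r>4$, the endpoints of this interval grow like $\frac{3n}{4}$ and $\bigl(1-\frac1r\bigr)n$ with $1-\frac1r>\frac34$, so by the prime number theorem the number of primes it contains,
$$\pi\!\left(n-\left[\tfrac{n}{r}\right]-2\right)-\pi\!\left(\left[\tfrac{3n}{4}\right]\right)\sim\Bigl(\tfrac14-\tfrac1r\Bigr)\frac{n}{\log n}\longrightarrow\infty,$$
is positive for all sufficiently large $n$. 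Choosing $N(r)$ large enough that this count is positive and that $n\ge r$ (so that automatically $p\le n-\left[\frac{n}{r}\right]-2\le n-3$) yields the claim, and the Theorem then applies verbatim.

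The computations above are elementary, so the only points genuinely requiring care are the floor–ceiling identity in part a) together with the non-primality of $\frac{9n}{10}$ that rescues the case $10\mid n$, and in part b) the invocation of a prime-counting estimate guaranteeing a prime in an interval of positive relative width $\frac14-\frac1r$. Neither is a deep obstacle: the substantive work of the corollary has already been carried out in the Theorem, which delivers both generating pairs at once.
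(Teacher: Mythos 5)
Your proposal is correct and follows essentially the same route as the paper: part a) reduces to finding a prime $p$ with $\left[\frac{3n}{4}\right]\le p\le n-\left[\frac{n}{10}\right]-1$ via the Nagura-based estimate (\ref{eq:primep}) for large $n$ and Table \ref{table:primenumber} for $20\le n\le 33$, and part b) reduces to a prime-counting argument from the prime number theorem, exactly as in the paper's proof. If anything, your write-up is slightly more careful than the paper's in part a): you explicitly verify the floor identity and handle the case $10\mid n$ (where (\ref{eq:primep}) alone only gives $p\le n-\left[\frac{n}{10}\right]$) by observing that $\frac{9n}{10}$ is then a multiple of $9$ and hence not prime, and in part b) you count primes directly in the target interval rather than introducing the paper's auxiliary $\varepsilon$-slack; these are presentational refinements, not a different method.
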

\begin{proof} 
a) Take an integer $n\ge 20$ not equal to $32$. It follows from (\ref{eq:primep}) and Table \ref{table:primenumber} that there exists a prime $p$ such that $\left[\frac{3n}{4}\right]\le p\le n - \left[\frac{n}{10}\right] -1$. If a permutation $\mu\in A_n$ fixes at most $\left[\frac{n}{10}\right]-1$ points, then it moves at least $n-\left[\frac{n}{10}\right]+1\ge p+2$ points. The required statement is, thus, a consequence of Theorem \ref{th:alternatingorigami}.

\vs b) If $r>4$ then the number $h=\frac{4}{3}\cdot\frac{r-1}{r}$ is greater than 1. Take a real $\varepsilon >0$ satisfying $h-\varepsilon >1$. For any integer $n$ large enough, there exists a prime $p$ between $\left[\frac{3n}{4}\right]$ and $(h-\varepsilon)\left[\frac{3n}{4}\right]$. Indeed, for each $k>1$ and $x$ large enough, there always is a prime between $x$ and $k x$\,: due to the asymptotic estimate $\pi(x)\sim \drob{x}{\ln{x}}$\; of the prime-counting function, we have $\pi(k x) - \pi(x)\underset{x\to\infty}{\longto} \infty$. Therefore,
$$\left[\frac{3n}{4}\right]\le\; p\;\le \left(\frac{4}{3}\cdot\frac{r-1}{r}-\varepsilon\right)\left[\frac{3n}{4}\right] \le \frac{r-1}{r}n - \varepsilon\left[\frac{3n}{4}\right] = n - \frac{n}{r} - \varepsilon\left[\frac{3n}{4}\right].$$
In particular, $p\le n-\left[\frac{n}{r}\right] - 2$ when $[3n/4]\ge 2/\varepsilon$. Thus for sufficiently large $n$ any permutation $\mu\in A_n$ that fixes at most $\left[\frac{n}{r}\right]$ points, moves at least $n-\left[\frac{n}{r}\right]\ge p+2$ points. According to Theorem \ref{th:alternatingorigami}, such a permutation $\mu$ is the commutator of a generating pair of $A_n$ and $S_n$.
\end{proof}

\vs\begin{Corollary}\label{cor:altrandomcomm}
The probability that a random element of the alternating group $A_n$ is the commutator of a generating pair of $A_n$ and a generating pair of $S_n$ tends to $1$ as $n\to\infty$.
\end{Corollary}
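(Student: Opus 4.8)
The plan is to reduce the statement to a tail estimate on the number of fixed points and then quote the preceding corollary. Fix once and for all a real number $r>4$, say $r=5$. By Corollary \ref{cor:d10}b there is an integer $N=N(r)$ such that for every $n\ge N$, each $\mu\in A_n$ fixing at most $\left[\frac{n}{r}\right]$ points is the commutator of a generating pair of $A_n$ and a generating pair of $S_n$. It therefore suffices to show that the proportion of permutations in $A_n$ fixing \emph{more} than $\left[\frac{n}{r}\right]$ points tends to $0$ as $n\to\infty$; the complementary event forces $\mu$ to be a commutator of the required generating pairs once $n\ge N$.

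To control that proportion I would compute the expected number of fixed points of a uniformly random $\mu\in A_n$. Writing $\fix(\mu)=\sum_{i=1}^{n}\one_{\{\mu(i)=i\}}$ and using linearity of expectation, it is enough to find the probability that a fixed point $i$ is preserved. A permutation $\mu\in A_n$ with $\mu(i)=i$ restricts to a permutation of the remaining $n-1$ points whose parity is unchanged, so such permutations are in bijection with $A_{n-1}$; hence
$$\Pr_{\mu\in A_n}[\mu(i)=i]=\frac{|A_{n-1}|}{|A_n|}=\frac{(n-1)!/2}{n!/2}=\frac{1}{n}.$$
Summing over the $n$ points gives an expected number of fixed points equal to $1$, exactly as for the full symmetric group.

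With the first moment in hand, Markov's inequality closes the argument immediately: since $\fix(\mu)$ is a non-negative integer-valued random variable with mean $1$,
$$\Pr_{\mu\in A_n}\!\left[\fix(\mu)>\left[\tfrac{n}{r}\right]\right]\;\le\;\frac{1}{\left[\frac{n}{r}\right]+1}\;\underset{n\to\infty}{\longto}\;0.$$
Consequently the probability that a random $\mu\in A_n$ fixes at most $\left[\frac{n}{r}\right]$ points, and hence (for $n\ge N$) is a commutator of the desired generating pairs, is at least $1-\big(\left[\frac{n}{r}\right]+1\big)^{-1}$, which tends to $1$.

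I do not expect a serious obstacle. The only point that genuinely needs care is the passage from $S_n$ to $A_n$: one must verify that conditioning on a fixed point leaves a uniform copy of $A_{n-1}$, so that the per-point probability is honestly $1/n$ rather than a value carelessly transcribed from the symmetric case. The crude Markov bound is more than sufficient for a limit statement; a sharper Poisson-type estimate (the number of fixed points of a uniform permutation converges to a $\mathrm{Poisson}(1)$ law, giving an exponentially small tail) would be available but is unnecessary here.
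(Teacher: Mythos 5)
Your proof is correct, but it reaches the key tail estimate by a different route than the paper. The paper invokes part a) of Corollary \ref{cor:d10} (explicit, valid for $n\ge 33$) and bounds the number of permutations of $S_n$ fixing at least $k$ points by $\binom{n}{k}(n-k)!=\frac{n!}{k!}$, a union bound over $k$-element subsets; with $k=[n/10]$ this gives $p_n\ge 1-\frac{2}{[n/10]!}$, a super-exponentially fast rate, and it sidesteps parity considerations entirely, since it suffices that the even permutations with many fixed points sit inside the set of all such permutations of $S_n$. You instead invoke part b) with $r=5$, compute the expected number of fixed points of a uniform element of $A_n$ to be $1$ (your bijection with $A_{n-1}$, checking that conditioning on a fixed point preserves parity and uniformity, is exactly the point needing care, and you handle it correctly), and then apply Markov's inequality, getting a tail bound of order $1/n$. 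Both arguments are complete proofs of the limit statement; the paper's counting buys a far stronger quantitative rate (factorial versus $O(1/n)$ decay of the exceptional proportion), while your first-moment argument is softer, more standard, and works entirely inside $A_n$ without reference to $S_n$ counts.
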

\begin{proof}
The number of permutations in $S_n$ that fix at least $k$ points doesn't exceed  $\binom{n}{k} (n-k)!=\frac{n!}{k!}$. Indeed, for given $k$ points there are $(n-k)!$ permutations fixing them, and there are $\binom{n}{k}$ ways to choose $k$ points among $n$. 
 
The number of permutations in $A_n$ fixing at most $k-1$ points is not less than $\frac{n!}{2} - \frac{n!}{k!}$. Therefore, due to Corollary \ref{cor:d10}a,  the probability $p_n$ that a random element of $A_n$ is the commutator of a generating pair of $A_n$ and a generating pair of $S_n$ is bounded by
$$p_n\ge \frac{\frac{n!}{2} - \frac{n!}{ [n/10]! }}{n!/2}=1 -  \frac{2}{ [n/10]! 
} \quad \textrm{for } n\ge 33.$$ 
This implies that\; $\lim\limits_{n\to\infty} p_n=1$, as required. 
\end{proof}

\begin{rema} 
It is well known and easy to prove by inclusion-exclusion principle, that the number of permutations in $S_n$ fixing no point (such permutations are called \deff{derangements}) is equal to 
\begin{equation}\label{eq:derang}
   a_n=n! \sum_{m=0}^n \frac{(-1)^m}{m!}=\left[\frac{n!}{e}+\frac{1}{2}\right].
\end{equation}
Moreover, denote by $b_n$ and $c_n$ the numbers of even and odd derangements respectively. Then\footnote{See, for instance, the note \cite[2005]{benjamin-b-n} by Arthur T. Benjamin, Curtis T. Bennett and Florence Newberger.}
\begin{equation}\label{eq:evenderang}
   b_n - c_n=(-1)^{n-1}(n-1).
\end{equation}
Indeed, let $A=(x_{i j})_{i,j=1}^{n}$ be the $n\times n$ matrix with zeroes on the diagonal and ones elsewhere, that is, $x_{i i}=0$ and $x_{i j}=1$ for any $1\le i\neq j\le n$. We have
$$\det{A}=\sum_{\la\in S_n}\sign{\la}\cdot x_{1\la(1)}x_{2\la(2)}\hdots x_{n\la(n)},$$
where each even derangement contributes $1$, each odd derangement contributes $-1$ and the permutations that fix a point contribute $0$. It is easy to show that the eigenvalues of the symmetric matrix $A$ are $-1$ with multiplicity $n-1$ and $n-1$ with multiplicity $1$. Thus, we obtain that $\det{A}=(-1)^{n-1}(n-1)$, implying the relation (\ref{eq:evenderang}).

From (\ref{eq:derang}), (\ref{eq:evenderang}) and $a_n=b_n+c_n$ follows the formula
$$b_n=\frac{1}{2}\left[\frac{n!}{e}+\frac{1}{2}\right] +\frac{(-1)^{n-1}(n-1)}{2}.$$
So, the number of even permutations fixing at most $k$ points is equal to
$$\binom{n}{0}b_n+\binom{n}{1}b_{n-1}+\hdots+\binom{n}{k}b_{n-k}=\frac{1}{2}\sum_{i=0}^k\binom{n}{i}
\left(  \left[\frac{(n-i)!}{e}+\frac{1}{2}\right]  -  (-1)^{n-i}(n-i-1) \right).$$
\end{rema}

\vs
Let $G$ be a group and let $(g_1,\,\hdots,\, g_k)$ be an ordered $k$-tuple of its elements. The following operations are called \deff{elementary Nielsen moves}:
\begin{enumerate}
  \item[(N1)] \hspace{1cm} interchange $g_i$ and $g_j$ where $i\neq j$;
  \item[(N2)] \hspace{1cm} replace $g_i$ by $g_i\inv$;
  \item[(N3)] \hspace{1cm} replace $g_i$ by $g_i g_j$ or $g_j g_i$ where $i\neq j$.
\end{enumerate}
A composition of such elementary operations is a \deff{Nielsen transformation}. Of course, it carries any generating vector (a tuple of elements that generates $G$) to a generating vector. In particular, a Nielsen transformation of a free basis of the free group $F_k$ defines an automorphism of $F_k$\,: with a Nielsen transformation sending $(x_1, \dots, x_k)$ to $(y_1, \dots, y_k)$ we associate the automorphism $(x_1, \dots, x_k)\mapsto (y_1, \dots, y_k)$. It is well-known that the automorphism group $\aut(F_k)$ consists of such automorphisms.

Let $G$ be a finite group, $d(G)$ be the minimal number of generators in $G$, and for $k\ge d(G)$ denote by $V_k(G)$ the set of $k$-tuples generating $G$. 
The group $\aut(F_k)$ acts on this set by Nielsen transformations, and the orbits are just the Nielsen equivalence classes. More accurately, if we fix a free basis $(x_1, \dots, x_k)$ of $F_k$, then $V_k(G)$ can be identified with the set of epimorphisms $\mathrm{Epi}(F_k, G)=\{f : F_k\twoheadrightarrow G\}$. From this point of view, the \emph{left} action of $\aut(F_k)$ on $V_k(G)$ is given by composition
\begin{equation*}
   \gamma\cdot f = f\circ \gamma\inv \quad \textrm{for}\; \gamma\in \aut(F_k)\; \textrm{and}\; f\in \mathrm{Epi}(F_k, G).
\end{equation*}
Now, consider also the diagonal action of the group $\aut(G)$ on $V_k(G)$. This action commutes with that of the group $\aut(F_k)$, as we have
\begin{equation*}
   \varphi\cdot(\gamma\cdot f) = \varphi\circ f \circ \gamma\inv \quad \textrm{for}\; \varphi\in\aut(G),\; \gamma\in \aut(F_k)\; \textrm{and}\; f\in \textrm{Epi}(F_k, G).
\end{equation*}
The orbits of the product $\aut(F_k)\times \aut(G)$ acting on the set $V_k(G)$ are called the \deff{systems of transitivity} (or \deff{$T_k$-systems}) of $G$. They were first introduced by Bernhard and Hanna Neumann in \cite[1951]{neumann-neumann}, and extensively studied since (see Igor Pak's survey \cite[2001]{pak}).

Let $G$ be a two-generator group and $(g_1,g_2)$ be a generating pair. The automorphism group $\aut(F_2)$ acts on the set $V_2(G)$ via composition of Nielsen moves (N1)--(N3). It is easy to check that if $(g_1,g_2)$ is sent to $(h_1,h_2)$ by a Nielsen move, then $[g_1,g_2]$ must be conjugated to $[h_1,h_2]^{\pm 1}$. This was first discovered by Graham Higman. Furthermore, if $G$ is one of the groups $A_n$ or $S_n$ where $n\neq 6$, then all automorphisms of $G$ are induced by conjugation through an element of $S_n$ (see Theorem 8.2A and Exercise 8.2.2 of the textbook \cite{dixon-mortimer} by John D. Dixon and Brian Mortimer). We obtain the following known lemma: 

\begin{Lemma}\label{lem:invariant}
	The conjugacy class of the commutator is an invariant of the $T_2$-systems in the alternating and symmetric groups of degree $n\neq 6$.
\end{Lemma}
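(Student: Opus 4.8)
The plan is to show that the claimed invariant is unchanged under each of the two commuting actions whose orbits define the $T_2$-systems: the Nielsen action of $\aut(F_2)$ and the diagonal action of $\aut(G)$. Since a $T_2$-system is a single orbit of the product $\aut(F_2)\times\aut(G)$, it suffices to check invariance for generators of each factor separately. Throughout, the \emph{conjugacy class of the commutator} should be read as the $\sn$-conjugacy class, equivalently the cycle type, of $[g_1,g_2]$; this is the level at which the statement holds and is all that the applications require.

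First I would treat the $\aut(F_2)$ action. It is enough to inspect the effect of the three elementary Nielsen moves (N1)--(N3) on the commutator of a pair $(g_1,g_2)$. A direct computation gives $[g_2,g_1]=[g_1,g_2]\inv$ for (N1); $[g_1\inv,g_2]=g_1\inv[g_1,g_2]\inv g_1$ for (N2); and $[g_1 g_2, g_2]=[g_1,g_2]$ together with $[g_2 g_1,g_2]=g_2[g_1,g_2]g_2\inv$ for (N3) (with the analogous identities when the roles of $g_1$ and $g_2$ are exchanged). In every case the new commutator is a conjugate of $[g_1,g_2]^{\pm 1}$. Because both conjugation and inversion preserve cycle type — a permutation and its inverse share the same cycle structure — the $\sn$-conjugacy class is fixed by each elementary move, hence by every Nielsen transformation, as these moves generate $\aut(F_2)$.

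Next I would treat the $\aut(G)$ action. Here I invoke the fact recalled just before the statement: for $n\neq 6$ every automorphism $\varphi$ of $G=\an$ or $G=\sn$ is induced by conjugation by some $\pi\in\sn$. Consequently $\varphi([g_1,g_2])=[\varphi(g_1),\varphi(g_2)]=\pi\,[g_1,g_2]\,\pi\inv$, which again has the same cycle type as $[g_1,g_2]$. Combining the two steps, the cycle type of the commutator is constant along every $\aut(F_2)\times\aut(G)$-orbit, which is exactly the assertion.

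The only genuinely delicate point — the \textbf{hard part} — is bookkeeping the two independent sources of ambiguity: the inversion $[g_1,g_2]\mapsto[g_1,g_2]\inv$ produced by the Nielsen moves, and the possibility that the element $\pi$ realizing an automorphism of $\an$ is \emph{odd}. Either of these can move $[g_1,g_2]$ to a different $\an$-conjugacy class whenever the relevant $\sn$-class splits in $\an$, so the finer $\an$-conjugacy class is genuinely not a $T_2$-invariant. The resolution, and the reason the statement is phrased via the $\sn$-conjugacy class, is precisely that passing to the cycle type absorbs both inversion and conjugation by odd permutations; once invariance is recorded at that level the proof closes with no further case analysis.
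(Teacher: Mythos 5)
Your proof is correct and takes essentially the same approach as the paper: Higman's observation that an elementary Nielsen move replaces the commutator by a conjugate of $[g_1,g_2]^{\pm 1}$, combined with the fact that for $n\neq 6$ every automorphism of $A_n$ or $S_n$ is induced by conjugation by an element of $S_n$; the paper merely leaves the move-by-move computations you write out as ``easy to check.'' Your closing clarification that the invariant must be read as the $S_n$-conjugacy class (cycle type), since inversion and conjugation by odd permutations can interchange split $A_n$-classes, is a correct reading of the (tersely stated) lemma and of how it is used later.
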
 

\vs In Table \ref{table:alternatingorigami}, we give the number of $T_2$-systems in $A_n$ for small values of $n$. These data principally come from the following papers:
\begin{itemize}
   \item[] \hspace{-.5cm}\cite[1951]{neumann-neumann} by Bernhard H. Neumann and Hanna Neumann, and \cite[1936]{hall} by Philip Hall (for $n=5$),
   \item[] \hspace{-.5cm}\cite[1972]{stork} by Daniel Stork (for $n=6$),
   \item[] \hspace{-.5cm}\cite[1998]{higuchi-m} by Osamu Higuchi and Izumi Miyamoto (for $n=7, 8, 9$).
\end{itemize}

\begin{table}[htdp]
\caption{Number of $T_2$-systems in $A_n$ with $3\le n\le 9$.}
\renewcommand{\arraystretch}{1.2}
\begin{equation*}
\begin{array}{|c|c|c|c|c|}
\hline
\textbf{$G$} & \textbf{Order}&
	\begin{array}{c} \textbf{Number of conjugacy classes}\\ \textbf{of commutators of gen.~pairs} \end{array} & 
	\begin{array}{c} \textbf{Number of}\\ \textbf{$T_2$-systems} \end{array} & 
	\begin{array}{c} \textbf{Number of $\aut(G)$-}\\ \textbf{orbits in $V_2(G)$} \end{array} \\ \hline\hline 
	A_3 & 3 & 1 & 1 & 4\\
	A_4 & 12 & 1 & 1 & 4\\
	A_5 & 60 & 2 & 2 & 19\\
	A_6 & 360 & 3 & 4 & 53\\
	A_7 & 2520 & 6 & 16 & 916\\
	A_8 & 20160 & 7 & 18 & 7448\\
	A_9 & 181440 & 10 & 38 & 77004\\\hline
\end{array}
\end{equation*}
\renewcommand{\arraystretch}{1}
\label{table:alternatingorigami}
\end{table}%

According to \cite[Theorem 1.8]{garion-shalev}, for any $\varepsilon >0$ and $n$ large enough, the number of $T_2$-systems in the group $A_n$ is at least $e^{(1/2-\varepsilon)\log^2 n}$. Now, our theorem allows us to improve this asymptotic expansion:
\begin{Corollary}\label{cor:alternatingstrata} 
For any real $r>4$ and sufficiently large $n$, the number of $T_2$-systems in $A_n$ and the number of $T_2$-systems in $S_n$ are both greater than
$$
\frac{1}{2}P(n)-\frac{1}{2}{\medmu P\big(n-[\sfrac{n}{r}]\big)},
$$
where $P(n)$ denotes the number of unrestricted partitions\footnote{$P(n)$ is the number of ways of writing $n$ as a sum of positive integers; two sums that differ only in the order of their summands are considered to be the same partition.} of a positive integer $n$.
\end{Corollary}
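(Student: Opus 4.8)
The plan is to combine the invariance of the commutator class with the counting of Corollary~\ref{cor:d10}b, and then reduce everything to an elementary count of partitions. First I would record the reduction. By Lemma~\ref{lem:invariant} the $S_n$-conjugacy class of the commutator is an invariant of the $T_2$-systems of $A_n$ and of $S_n$ (since $n$ is large, $n\neq 6$), and in $S_n$ every conjugacy class is closed under inversion, so distinct cycle types yield distinct invariants and hence distinct $T_2$-systems. As the cycle type of a permutation of $\{1,\dots,n\}$ is just a partition of $n$ (fixed points being recorded as parts equal to $1$), it suffices to exhibit many partitions of $n$ that genuinely occur as commutators of generating pairs. By Corollary~\ref{cor:d10}b, for $r>4$ and $n$ large every even $\mu$ fixing at most $\left[\frac{n}{r}\right]$ points is such a commutator in both $A_n$ and $S_n$. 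Writing $k=\left[\frac{n}{r}\right]$, each of the two $T_2$-system counts is therefore at least the number $A_e$ of partitions of $n$ of positive sign (even permutations) in which the part $1$ occurs at most $k$ times.

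Next I would compute $A_e$. Let $P_e(m)$ and $P_o(m)$ denote the numbers of partitions of $m$ corresponding to even and to odd permutations, so that $P_e(m)+P_o(m)=P(m)$. Deleting $k+1$ parts equal to $1$ sets up a bijection between the partitions of $n$ having at least $k+1$ ones and all partitions of $n-k-1$; removing these $k+1$ fixed points does not change the sign, so the bijection preserves parity and the even partitions of $n$ with at least $k+1$ ones number exactly $P_e(n-k-1)$. Subtracting from the total of even partitions yields the clean formula
$$A_e \;=\; P_e(n)-P_e(n-k-1).$$

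It then remains to estimate the two terms. Put $D(m)=P_e(m)-P_o(m)$, so that $P_e(m)=\tfrac12\bigl(P(m)+D(m)\bigr)$. A short generating-function manipulation collapses the product defining $\sum_m D(m)\,q^m$ to $\prod_{s\text{ odd}}(1+q^s)$, whence $D(m)$ is the number of partitions of $m$ into distinct odd parts (equivalently, self-conjugate partitions); in particular $D(m)\ge 0$, which gives at once $P_e(n)\ge\tfrac12 P(n)$. For the subtracted term I would use $P_e(n-k-1)\le\tfrac12 P(n-k)$, which is equivalent to $D(n-k-1)\le P(n-k)-P(n-k-1)$. The right-hand side counts partitions of $n-k$ with all parts $\ge 2$ and, by the Hardy--Ramanujan asymptotics, has the same exponential order $e^{\pi\sqrt{2(n-k)/3}}$ (up to a polynomial factor) as $P(n-k)$, whereas $D(n-k-1)$ grows only like $e^{\pi\sqrt{(n-k)/6}}$, i.e. with half the exponent; since $k=\left[\frac{n}{r}\right]$ keeps $n-k\to\infty$, the inequality holds for all sufficiently large $n$. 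Combining the two estimates, and recalling $n-k=n-\left[\frac{n}{r}\right]$,
$$A_e \;=\; P_e(n)-P_e(n-k-1)\;\ge\;\frac{1}{2}P(n)-\frac{1}{2}P(n-k),$$
which is the asserted lower bound for both $A_n$ and $S_n$.

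I expect the last comparison to be the main obstacle: the factor $\frac12$ in the statement forces one to control the parity imbalance rather than merely discard the odd cycle types, and this requires both that even cycle types constitute at least half of all cycle types (the nonnegativity $D\ge 0$) and that the correction $D(n-k-1)$ is dominated by the gap $P(n-k)-P(n-k-1)$. The cleanest way to secure the latter is to identify $D$ with the self-conjugate partitions and observe that their exponential growth rate is exactly half that of $P$, so that $D(m)=o\bigl(P(m)-P(m-1)\bigr)$ and the required inequality holds for all large $n$.
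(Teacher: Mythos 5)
Your proposal is correct and reaches the stated bound, but by a genuinely different counting route than the paper. Both arguments share the same reduction: Lemma~\ref{lem:invariant} together with Corollary~\ref{cor:d10}b shows that distinct cycle types of commutators of generating pairs give distinct $T_2$-systems, and every even cycle type with at most $[\sfrac{n}{r}]$ fixed points is realized. From there the paper works with $A_n$-conjugacy classes: it quotes the D\'enes--Erd\H{o}s--Tur\'an formula~(\ref{eq:cd}) for $c(n)$, asserts that the classes meeting permutations with at least $k$ fixed points number $c(n-k)$, and finishes with the monotonicity of $Q$ (Lemma~\ref{lem:altqm}). You instead count $S_n$-classes directly: the parity-preserving bijection that deletes $k+1$ fixed points gives the exact count $P_e(n)-P_e(n-k-1)$ of even cycle types with at most $k$ ones, and the classical identity $P_e(m)-P_o(m)=Q(m)$ (your $D=Q$) converts this into the bound. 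Your choice of invariant is in fact the more careful one: since every automorphism of $A_n$ ($n\neq 6$) is conjugation by an element of $S_n$, it is the cycle type of the commutator, not its $A_n$-class, that is constant on a $T_2$-system; moreover the paper's claim that the $A_n$-classes fixing at least $k$ points number exactly $c(n-k)$ is not literally true, because splitting in $A_n$ and in $A_{n-k}$ need not match (the type $(3,1,1)$ does not split in $A_5$, while $(3)$ splits in $A_3$), although the inequality needed there still holds. The one loose step on your side is the final comparison $Q(n-k-1)\le P(n-k)-P(n-k-1)$: it is true, but knowing that $P(m)-P(m-1)$ is within a polynomial factor of $P(m)$ does not follow from the bare Hardy--Ramanujan asymptotic; it needs an extra input (the second-order term, convexity of $P$, or a saddle-point estimate for partitions with all parts $\ge 2$). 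You can avoid this entirely inside your own framework: since $Q$ is non-decreasing (Lemma~\ref{lem:altqm}) and $P(n-k-1)<P(n-k)$, one gets
$$P_e(n)-P_e(n-k-1)\;=\;\frac{1}{2}\big(P(n)-P(n-k-1)\big)+\frac{1}{2}\big(Q(n)-Q(n-k-1)\big)\;>\;\frac{1}{2}P(n)-\frac{1}{2}P\big(n-[\sfrac{n}{r}]\big)$$
for all large $n$, which is elementary and also delivers the strict inequality of the statement (your final display has only $\ge$, though your asymptotic step, once justified, does yield strictness as well).
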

\begin{proof} 
Due to Lemma \ref{lem:invariant} for $n\neq 6$, the number of $T_2$-systems in $A_n$ is not less than the number of conjugacy classes of the commutators $[\si,\tau]$, where $(\sigma,\tau)$ generate $A_n$ (the same is true for $S_n$). Due to Corollary \ref{cor:d10}b, this includes all conjugacy classes of even permutations fixing at most $\left[\frac{n}{r}\right]-1$ points for sufficiently large $n$.

According to J\'ozsef D\'{e}nes, Paul Erd\H{o}s and Paul Tur\'{a}n \cite[1969]{denes-e-t}, the number $c(n)$ of conjugacy classes in the alternating group $A_n$ is equal to\footnote{From this equation they derived the expansion
$c(n)\sim \frac{1}{2}P(n)\sim \frac{1}{8n\sqrt{3}} e^{\pi\sqrt{\frac{2n}{3}}}$\; as $n\to\infty$.} 
\begin{equation}\label{eq:cd}
   c(n)=\frac{1}{2}P(n)+ \frac{3}{2}Q(n),
\end{equation}
where $Q(n)$ denotes the number of partitions of $n$ into distinct odd summands (see also \cite[1980]{girse} by Robert D. Girse). Notice that the number of conjugacy classes in $A_n$ containing permutations that fix at least $k$ points is clearly $c(n-k)$. Therefore, the number of conjugacy classes  in $A_n$ of permutations fixing at most $k-1$ points is equal to $c(n)-c(n-k).$
For $k=\left[\frac{n}{r}\right]$ this gives 
$$c(n)-c\big(n-[\sfrac{n}{r}]\big)\;\; \textrm{classes}.$$

It is easy to see that $Q(n)>Q\big(n-[\sfrac{n}{r}]\big)$ for sufficiently large $n$ (see also Lemma \ref{lem:altqm} below). From this and the relation (\ref{eq:cd}), we deduce a lower bound for  the number of conjugacy classes of permutations in $A_n$ fixing at most $[\sfrac{n}{r}]-1$ points:
$$c(n)-c(n-[\sfrac{n}{r}])
=\frac{1}{2}\bigg(P(n)-P(n-[\sfrac{n}{r}])\bigg)+ \frac{3}{2}\bigg(Q(n)-Q(n-[\sfrac{n}{r}])\bigg)
> \frac{1}{2}\bigg(P(n)-P(n-[\sfrac{n}{r}])\bigg),$$
when $n$ is large enough, as required.
\end{proof}

\noindent \emph{\textbf{Remark 1.}}
A famous result of Godfrey H. Hardy and Srinivasa Ramanujan\index{Theorem!Hardy and Ramanujan} \cite[1918]{hardy-ramanudjan} states that 
$$P(n)\sim \frac{1}{4n\sqrt{3}} e^{\pi\sqrt{\frac{2n}{3}}}\quad\textrm{as }n\to\infty.$$
From this asymptotic behaviour follows that 
\begin{align*} 
\lim\limits_{n\to\infty} \frac{\medmu P\big(n-[\sfrac{n}{r}]\big) }{P(n)} &= 
\frac{r}{r-1}  \lim\limits_{n\to\infty} e^{\pi\sqrt{\frac{2}{3}} \left( \sqrt{n-[\sfrac{n}{r}]} -\sqrt{n} \right) } = 0, \\
\quad \textrm{and so}\quad &
P(n)-{\medmu P\big(n-[\sfrac{n}{r}]\big)} \sim \frac{1}{4n\sqrt{3}} e^{\pi\sqrt{\frac{2n}{3}}} \quad\textrm{as }n\to\infty.
\end{align*}
Therefore, we obtain a lower bound
$$
\frac{1}{2}\bigg(P(n)-{\medmu P\big(n-[\sfrac{n}{r}]\big)}\bigg)
\ge \frac{C}{n} e^{\pi\sqrt{\frac{2n}{3}}},
$$
for any $n> r$ and some real constant $C>0$ small enough.

\vs
\noindent \emph{\textbf{Remark 2.}} 
In the proof of the corollary above, we used the fact that $Q(n)-Q\big(n-[\sfrac{n}{r}]\big)>0$ for sufficiently large values of $n$. Actually, we even have the next stronger property:

\begin{Lemma}\label{lem:altqm}
For $n\ge 3$, the function $Q$ is non-decreasing: $Q(n)\ge Q(n-1)$.
\end{Lemma}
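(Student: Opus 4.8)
The plan is to prove the inequality by constructing an explicit injection from the set $\mathcal{D}(n-1)$ of partitions of $n-1$ into distinct odd summands to the corresponding set $\mathcal{D}(n)$ for $n$. Since $Q(m)=|\mathcal{D}(m)|$ by definition, producing such an injection $\phi\colon\mathcal{D}(n-1)\to\mathcal{D}(n)$ for every $n\ge 3$ immediately gives $Q(n)\ge Q(n-1)$. So the whole question reduces to exhibiting one map and checking it is well defined and one-to-one.

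The first point to record, and the real source of difficulty, is a parity obstruction. A sum of $k$ distinct odd integers is congruent to $k$ modulo $2$, so a partition of $n-1$ into distinct odd parts has a number of parts of the opposite parity to that of any partition of $n$. Consequently any injection \emph{must} change the number of parts, and the most naive idea — simply enlarging a single part — alters the total by an even amount and cannot work. The way I would circumvent this is to split according to whether $1$ occurs as a part, since inserting or deleting a $1$ is exactly the move that changes the part count by one.

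Concretely, given $\lambda\in\mathcal{D}(n-1)$ I would set $\phi(\lambda)=\lambda\cup\set{1}$ when $1$ is not a part of $\lambda$, and, when $1$ is a part, let $a$ denote the largest part and put $\phi(\lambda)=\big(\lambda\setminus\set{1,a}\big)\cup\set{a+2}$. For $n\ge 3$ one checks that $\lambda\neq\set{1}$ (as $\set{1}$ would force $n-1=1$), hence $a\ge 3$ and the second rule makes sense. In either case the total increases by exactly $1$, all parts stay odd, and $a+2>a=\max\lambda$ guarantees distinctness, so $\phi(\lambda)\in\mathcal{D}(n)$.

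It remains to verify injectivity, which I regard as the crux. The two rules have disjoint images: the first always yields a partition containing $1$, while the second yields one whose smallest part is at least $3$. Each rule is separately invertible — delete the $1$ in the first case, and in the second case subtract $2$ from the largest part and reinsert a $1$ — so $\phi$ is injective on each block and hence globally. The steps that deserve care, and which I expect to be the only genuinely delicate checks, are confirming that the second rule never creates a repeated part (ensured by $a+2>\max\lambda$) and that the two images truly cannot coincide; the boundary case $n=3$ is trivial because $\mathcal{D}(2)=\emptyset$.
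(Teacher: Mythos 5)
Your proof is correct, and it takes a different route from the paper's, though the two are secretly close. The paper proceeds via generating functions: from $(1-x)\prod_{l\ge 1}(1+x^{2l-1})=(1-x^2)\prod_{l\ge 2}(1+x^{2l-1})$ it extracts the identity $Q(n)-Q(n-1)=R(n)-R(n-2)$, where $R(m)$ counts partitions of $m$ into distinct odd parts $\ge 3$, and then shows $R(n)\ge R(n-2)$ by adding $2$ to the largest part. Your argument is a purely combinatorial repackaging of the same structure: your case split on whether $1$ is a part is exactly the combinatorial content of the generating-function identity (partitions of $n-1$ containing $1$ biject, by deleting the $1$, with the objects counted by $R(n-2)$; partitions of $n-1$ omitting $1$ biject, by inserting a $1$, with partitions of $n$ containing $1$), and your second rule $\left(\lambda\setminus\set{1,a}\right)\cup\set{a+2}$ is precisely the paper's injection conjugated by the delete-the-$1$ bijection. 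What your version buys is self-containedness and an explicit injection $\mathcal{D}(n-1)\hookrightarrow\mathcal{D}(n)$ witnessing the inequality directly, with the parity obstruction cleanly explaining why a case split is forced; what the paper's version buys is lighter bookkeeping (no cross-case disjointness or well-definedness checks, which you correctly identified and handled, including the degenerate $\lambda=\set{1}$ case) and the identity $Q(n)-Q(n-1)=R(n)-R(n-2)$ as a reusable byproduct.
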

\begin{proof}
The generating function for the sequence $(Q(n))_{n\in\N}$ is
$$f(x)=\prod_{l=1}^{\infty}(1+x^{2l-1})= (1+x)(1+x^3)\hdots (1+x^{2l-1})\hdots = 1+\sum_{n\in\N} Q(n)x^n.$$
Denote by $R(n)$ the number of partitions of $n$ into distinct odd summands greater than $2$. Then
$$\prod_{l=2}^{\infty}(1+x^{2l-1}) = 1+\sum_{n\in\N} R(n)x^n,$$
and so we obtain
\begin{align*} 
1+ \sum_{n=2}^{\infty} \big(Q(n)-Q(n-1)\big)x^n &= (1-x) f(x)\\
&=(1-x^2)\prod_{l=2}^{\infty}(1+x^{2l-1})=1-x^2 + \sum_{n=3}^{\infty} \big(R(n)-R(n-2)\big)x^n,
\end{align*}
as $Q(1)=1$ and $R(1)=R(2)=0$. Therefore, the relation $Q(n)-Q(n-1)=R(n)-R(n-2)$ is satisfied for any $n\ge 3$. Let us show that $R(n)\ge R(n-2)$. Indeed, if 
$$n-2=a_1+a_2+\hdots+a_r$$
is a partition of $n-2$ with all $a_i$'s odd and such that $a_1> a_2> \hdots>a_r\ge 3$, then we get a partition of $n$ having the same property:
$$n=(a_1+2)+a_2+\hdots+a_r.$$
This gives an injection from the set of partitions for $n-2$ with distinct odd summands $\ge 3$  into the set of such partitions for $n$. As a result, $Q(n)-Q(n-1)=R(n)-R(n-2)\ge 0$.
\end{proof}

\newpage
\addcontentsline{toc}{section}{References}
\bibliographystyle{siam}

\end{document}